\title[Nuclear dimension of purely infinite $\mathrm{C}^*$-algebras]{ On the nuclear dimension of strongly purely infinite $\mathrm{C}^*$-algebras }
\author{Gábor Szabó}
\address{Fraser Noble Building, Institute of Mathematics, University of Aberdeen, \linebreak \text{}\hspace{3mm} Aberdeen AB24 3UE, Scotland, UK.}
\email{gabor.szabo@abdn.ac.uk}
\thanks{\emph{Supported by:} SFB 878 \emph{Groups, Geometry and Actions} and EPSRC grant EP/N00874X/1.}
\subjclass[2010]{46L85, 46L35}
\begin{document}

\renewcommand\matrix[1]{\left(\begin{array}{*{10}{c}} #1 \end{array}\right)}  
\newcommand\set[1]{\left\{#1\right\}}  
\newcommand\mset[1]{\left\{\!\!\left\{#1\right\}\!\!\right\}}

\newcommand{\IA}[0]{\mathbb{A}} \newcommand{\IB}[0]{\mathbb{B}}
\newcommand{\IC}[0]{\mathbb{C}} \newcommand{\ID}[0]{\mathbb{D}}
\newcommand{\IE}[0]{\mathbb{E}} \newcommand{\IF}[0]{\mathbb{F}}
\newcommand{\IG}[0]{\mathbb{G}} \newcommand{\IH}[0]{\mathbb{H}}
\newcommand{\II}[0]{\mathbb{I}} \renewcommand{\IJ}[0]{\mathbb{J}}
\newcommand{\IK}[0]{\mathbb{K}} \newcommand{\IL}[0]{\mathbb{L}}
\newcommand{\IM}[0]{\mathbb{M}} \newcommand{\IN}[0]{\mathbb{N}}
\newcommand{\IO}[0]{\mathbb{O}} \newcommand{\IP}[0]{\mathbb{P}}
\newcommand{\IQ}[0]{\mathbb{Q}} \newcommand{\IR}[0]{\mathbb{R}}
\newcommand{\IS}[0]{\mathbb{S}} \newcommand{\IT}[0]{\mathbb{T}}
\newcommand{\IU}[0]{\mathbb{U}} \newcommand{\IV}[0]{\mathbb{V}}
\newcommand{\IW}[0]{\mathbb{W}} \newcommand{\IX}[0]{\mathbb{X}}
\newcommand{\IY}[0]{\mathbb{Y}} \newcommand{\IZ}[0]{\mathbb{Z}}

\newcommand{\CA}[0]{\mathcal{A}} \newcommand{\CB}[0]{\mathcal{B}}
\newcommand{\CC}[0]{\mathcal{C}} \newcommand{\CD}[0]{\mathcal{D}}
\newcommand{\CE}[0]{\mathcal{E}} \newcommand{\CF}[0]{\mathcal{F}}
\newcommand{\CG}[0]{\mathcal{G}} \newcommand{\CH}[0]{\mathcal{H}}
\newcommand{\CI}[0]{\mathcal{I}} \newcommand{\CJ}[0]{\mathcal{J}}
\newcommand{\CK}[0]{\mathcal{K}} \newcommand{\CL}[0]{\mathcal{L}}
\newcommand{\CM}[0]{\mathcal{M}} \newcommand{\CN}[0]{\mathcal{N}}
\newcommand{\CO}[0]{\mathcal{O}} \newcommand{\CP}[0]{\mathcal{P}}
\newcommand{\CQ}[0]{\mathcal{Q}} \newcommand{\CR}[0]{\mathcal{R}}
\newcommand{\CS}[0]{\mathcal{S}} \newcommand{\CT}[0]{\mathcal{T}}
\newcommand{\CU}[0]{\mathcal{U}} \newcommand{\CV}[0]{\mathcal{V}}
\newcommand{\CW}[0]{\mathcal{W}} \newcommand{\CX}[0]{\mathcal{X}}
\newcommand{\CY}[0]{\mathcal{Y}} \newcommand{\CZ}[0]{\mathcal{Z}}

\newcommand{\FA}[0]{\mathfrak{A}} \newcommand{\FB}[0]{\mathfrak{B}}
\newcommand{\FC}[0]{\mathfrak{C}} \newcommand{\FD}[0]{\mathfrak{D}}
\newcommand{\FE}[0]{\mathfrak{E}} \newcommand{\FF}[0]{\mathfrak{F}}
\newcommand{\FG}[0]{\mathfrak{G}} \newcommand{\FH}[0]{\mathfrak{H}}
\newcommand{\FI}[0]{\mathfrak{I}} \newcommand{\FJ}[0]{\mathfrak{J}}
\newcommand{\FK}[0]{\mathfrak{K}} \newcommand{\FL}[0]{\mathfrak{L}}
\newcommand{\FM}[0]{\mathfrak{M}} \newcommand{\FN}[0]{\mathfrak{N}}
\newcommand{\FO}[0]{\mathfrak{O}} \newcommand{\FP}[0]{\mathfrak{P}}
\newcommand{\FQ}[0]{\mathfrak{Q}} \newcommand{\FR}[0]{\mathfrak{R}}
\newcommand{\FS}[0]{\mathfrak{S}} \newcommand{\FT}[0]{\mathfrak{T}}
\newcommand{\FU}[0]{\mathfrak{U}} \newcommand{\FV}[0]{\mathfrak{V}}
\newcommand{\FW}[0]{\mathfrak{W}} \newcommand{\FX}[0]{\mathfrak{X}}
\newcommand{\FY}[0]{\mathfrak{Y}} \newcommand{\FZ}[0]{\mathfrak{Z}}

\newcommand{\fc}[0]{\mathfrak{c}}

\newcommand{\Ra}[0]{\Rightarrow}
\newcommand{\La}[0]{\Leftarrow}
\newcommand{\LRa}[0]{\Leftrightarrow}

\renewcommand{\phi}[0]{\varphi}
\newcommand{\eps}[0]{\varepsilon}

\newcommand{\quer}[0]{\overline}
\newcommand{\uber}[0]{\choose}
\newcommand{\ord}[0]{\operatorname{ord}}		
\newcommand{\GL}[0]{\operatorname{GL}}
\newcommand{\supp}[0]{\operatorname{supp}}	
\newcommand{\id}[0]{\operatorname{id}}		
\newcommand{\Sp}[0]{\operatorname{Sp}}		
\newcommand{\eins}[0]{\mathbf{1}}			
\newcommand{\diag}[0]{\operatorname{diag}}
\newcommand{\auf}[1]{\quad\stackrel{#1}{\longrightarrow}\quad}
\newcommand{\prim}[0]{\operatorname{Prim}}
\newcommand{\ad}[0]{\operatorname{Ad}}
\newcommand{\ext}[0]{\operatorname{Ext}}
\newcommand{\ev}[0]{\operatorname{ev}}
\newcommand{\fin}[0]{{\subset\!\!\!\subset}}
\newcommand{\diam}[0]{\operatorname{diam}}
\newcommand{\Hom}[0]{\operatorname{Hom}}
\newcommand{\Aut}[0]{\operatorname{Aut}}
\newcommand{\del}[0]{\partial}
\newcommand{\dimnuc}[0]{\dim_{\mathrm{nuc}}}
\newcommand{\dr}[0]{\operatorname{dr}}
\newcommand{\dimrok}[0]{\dim_{\mathrm{Rok}}}
\newcommand{\dimrokcyc}[0]{\dim_{\mathrm{Rok}}^{\mathrm{cyc}}}
\newcommand{\dimrokcycc}[0]{\dim_{\mathrm{Rok}}^{\mathrm{cyc,c}}}
\newcommand{\dimnuceins}[0]{\dimnuc^{\!+1}}
\newcommand{\dreins}[0]{\dr^{\!+1}}
\newcommand{\dimrokeins}[0]{\dimrok^{\!+1}}
\newcommand{\reldimrok}[2]{\dimrok(#1~|~#2)}
\newcommand{\mdim}[0]{\operatorname{mdim}}
\newcommand*\onto{\ensuremath{\joinrel\relbar\joinrel\twoheadrightarrow}} 
\newcommand*\into{\ensuremath{\lhook\joinrel\relbar\joinrel\rightarrow}}  
\newcommand{\dst}[0]{\displaystyle}
\newcommand{\cstar}[0]{$\mathrm{C}^*$}
\newcommand{\dist}[0]{\operatorname{dist}}
\newcommand{\ue}[0]{{\approx_{\mathrm{u}}}}
\newcommand{\End}[0]{\operatorname{End}}
\newcommand{\Ell}[0]{\operatorname{Ell}}
\newcommand{\inv}[0]{\operatorname{Inv}}
\newcommand{\ann}[0]{\operatorname{Ann}}

\newtheorem{satz}{Satz}[section]		
\newtheorem{cor}[satz]{Corollary}
\newtheorem{lemma}[satz]{Lemma}
\newtheorem{prop}[satz]{Proposition}
\newtheorem{theorem}[satz]{Theorem}
\newtheorem*{theoreme}{Theorem}

\theoremstyle{definition}
\newtheorem{defi}[satz]{Definition}
\newtheorem*{defie}{Definition}
\newtheorem{defprop}[satz]{Definition \& Proposition}
\newtheorem{nota}[satz]{Notation}
\newtheorem*{notae}{Notation}
\newtheorem{rem}[satz]{Remark}
\newtheorem*{reme}{Remark}
\newtheorem{example}[satz]{Example}
\newtheorem{defnot}[satz]{Definition \& Notation}
\newtheorem{question}[satz]{Question}
\newtheorem*{questione}{Question}

\newenvironment{bew}{\begin{proof}[Proof]}{\end{proof}}

\begin{abstract} 
We show that separable, nuclear and strongly purely infinite \cstar-algebras have finite nuclear dimension. In fact, the value is at most three. This exploits a deep structural result of Kirchberg and R{\o}rdam on strongly purely infinite \cstar-algebras that are homotopic to zero in an ideal-system preserving way.
\end{abstract}

\maketitle


\setcounter{section}{-1}

\section{Introduction}
\noindent
Over the last decade, our understanding of simple, nuclear C*-algebras has advanced enormously. This has largely been spurred by the interplay between certain topological and algebraic regularity properties, such as finite topological dimension, tensorial absorption of suitable strongly self-absorbing \cstar-algebras and order completeness of homological invariants, see \cite{ElliottToms08} for an overview. Indeed, this reflects the Toms-Winter regularity conjecture \cite{ElliottToms08, Winter10, WinterZacharias10}, which has spawned an investigation of simple \cstar-algebras that has lead to deep results at a fast and furious pace. We refer to \cite{BBSTWW} and its excellent introduction for an overview of the current state-of-the-art concerning the Toms-Winter regularity conjecture. The most recent highlight of the Elliott classification program is certainly the combination of papers \cite{GongLinNiu15, ElliottNiu15, ElliottGongLinNiu15, TikuisisWhiteWinter15} (by many hands), which have completed the classification of separable, unital, simple \cstar-algebras with finite nuclear dimension and satisfying the UCT. See in particular \cite[Section 6]{TikuisisWhiteWinter15} for a comprehensive summary.

Apart from applications in classification, it now stands to reason (see \cite{TikuisisWinter14,  RobertTikuisis15, BarlakEndersMatuiSzaboWinter}) that the interplay between finite nuclear dimension and $\CZ$-stability goes beyond the simple case and should indeed extend to the realm of non-simple, nuclear \cstar-algebras that are sufficiently non-type I. In particular, it is tempting to conjecture that all separable, nuclear and $\CZ$-stable \cstar-algebras have finite nuclear dimension. Given that Kirchberg has established a deep classification result \cite{KirchbergC} for separable, nuclear, strongly purely infinite \cstar-algebras long ago, a natural step toward this conjecture is to handle the $\CO_\infty$-absorbing case first. Note that a separable, nuclear \cstar-algebra $A$ is strongly purely infinite if and only if $A\cong A\otimes\CO_\infty$ (see \cite[3.2]{TomsWinter07} and \cite[5.11(iii), 8.6]{KirchbergRordam02}), and if and only if $A\cong A\otimes\CZ$ and $A$ is traceless (see \cite[3.12]{Kirchberg04}).

In this note, we take this first step towards the above conjecture and verify that all separable, nuclear and $\CO_\infty$-stable \cstar-algebras have finite nuclear dimension, and in fact the nuclear dimension is at most three. In \cite{BarlakEndersMatuiSzaboWinter}, it was shown that a nuclear dimension estimate 
\[
\dimnuceins(A\otimes\CO_\infty)\leq 2\dimnuceins(A\otimes\CO_2)
\] 
holds for all \cstar-algebras $A$. As an application, one could see that separable, nuclear, $\CO_\infty$-absorbing $\CC(X)$-algebras with simple fibres have finite nuclear dimension. Extending this technique in the first section, we show that in fact, a more general nuclear dimension estimate 
\[
\dimnuceins(A\otimes\CO_\infty)\leq 2\dimnuceins(A\otimes B)
\]
holds for all \cstar-algebras $A$ and all non-zero, separable \cstar-algebras $B$ with $B\cong B\otimes\CO_\infty$. This will be an application of Voiculescu's observation \cite{Voiculescu91} that cones over \cstar-algebras are quasidiagonal, combined with a trick from \cite{BarlakEndersMatuiSzaboWinter} involving positive elements with full spectrum in simple, purely infinite \cstar-algebras.
In particular, the above estimate implies that $\CO_\infty$ has a dimension-reducing effect on separable, nuclear \cstar-algebras upon tensorial stabilization if and only if there is some non-zero, separable \cstar-algebra with this property. The deep stuctural results by Kirchberg and R{\o}rdam \cite{KirchbergRordam05} on \cstar-algebras homotopic to zero in an ideal-system preserving way then show that a certain purely infinite AH algebra constructed by R{\o}rdam in \cite{Rordam04} has such a dimension-reducing effect. This yields the main result.


\section{2-colored embeddings into purely infinite ultrapowers}

\noindent
The following is contained in the proof of \cite[3.3]{BarlakEndersMatuiSzaboWinter} and was originally observed by Winter:

\begin{lemma} \label{full spectrum pi}
Let $A$ be a unital, simple and purely infinite \cstar-algebra. Let $h\in A$ be a positive contraction with full spectrum $[0,1]$. For every $\eps>0$, there is some $n\in\IN$, numbers $0\leq \lambda_0\leq \lambda_1\leq\dots\leq\lambda_n \leq 1$ and pairwise orthogonal projections $p_0,\dots,p_n\in A$ such that $h=_\eps \sum_{j=0}^n \lambda_j p_j$. Moreover, all the projections $p_j$ may be chosen to represent the trivial class in $K_0(A)$.
\end{lemma}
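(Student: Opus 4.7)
The strategy rests on two standard structural facts about unital, simple, purely infinite \cstar-algebras: (i) Zhang's theorem that every such algebra has real rank zero, so that self-adjoint elements admit norm-approximations of finite spectrum; and (ii) every class in $K_0(A)$ is represented by a projection in $A$, and the analogous statement holds inside every non-zero corner $qAq$ (which is again unital, simple and purely infinite). Fact (ii) is standard for purely infinite simple \cstar-algebras, where $V(A)\setminus\{0\}$ is naturally isomorphic to $K_0(A)$.

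First I would use real rank zero to obtain pairwise orthogonal projections $q_0,q_1,\ldots,q_n\in A$ and scalars $0=\mu_0\leq\mu_1\leq\ldots\leq\mu_n\leq 1$ with consecutive gaps bounded by $\eps/2$, such that $\|h-\sum_{j=0}^n \mu_j q_j\|<\eps/2$. The choice $\mu_0=0$ is legitimate because $h$ has full spectrum, so $0\in\sigma(h)$ forces every such approximation to contain an eigenvalue within $\eps/2$ of $0$, which may be relabeled as $\mu_0=0$ (with $q_0$ allowed to be zero if necessary).

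The heart of the construction is to redistribute $K_0$-class. For $j=1,\ldots,n$, using fact (ii) inside the corner $q_jAq_j$, I would choose a subprojection $r_j\leq q_j$ with
\[
[r_j] \;=\; [q_j]+[q_{j+1}]+\cdots+[q_n] \quad \text{in } K_0(A).
\]
Setting $r_{n+1}:=0$ and $p_0:=0$, define $p_j := (q_j-r_j)+r_{j+1}$ for $j\geq 1$. Each $p_j$ is a projection (an orthogonal sum of a subprojection of $q_j$ and a subprojection of $q_{j+1}$), with $[p_j]=([q_j]-[r_j])+[r_{j+1}]=0$ by choice of the $r_j$. Pairwise orthogonality reduces to consecutive pairs, where the only potential overlap lies inside $q_j$; there the relevant terms $q_j-r_j$ and $r_j$ are manifestly orthogonal.

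A direct telescoping then gives
\[
\sum_{j=0}^n \mu_j q_j - \sum_{j=0}^n \mu_j p_j \;=\; \sum_{j=1}^n (\mu_j-\mu_{j-1})\,r_j,
\]
whose norm is at most $\max_j(\mu_j-\mu_{j-1})<\eps/2$ because the $r_j$ are pairwise orthogonal projections. Setting $\lambda_j:=\mu_j$ yields the claimed $h=_\eps\sum_j\lambda_j p_j$. The principal obstacle is arranging $[p_j]=0$ for every index simultaneously with a good approximation: the accumulated $K_0$-class can only be parked in $p_0$, which is harmless precisely because the full spectrum assumption permits $\lambda_0=0$.
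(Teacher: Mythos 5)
Your proposal is correct and follows essentially the same route as the paper: real rank zero for the finite-spectrum approximation, Cuntz's realization of arbitrary $K_0$-classes by subprojections inside the corners $q_jAq_j$, and a telescoping redistribution between consecutive projections so that each $p_j$ has trivial class while the error is controlled by the small spectral gaps together with the fact that $0\in\sigma(h)$ forces the lowest coefficient to be small. The only difference is cosmetic: you prescribe the classes $[r_j]=[q_j]+\cdots+[q_n]$ in closed form, whereas the paper chooses the subprojections inductively from the top index down, which amounts to the same bookkeeping.
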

\begin{proof}
Since $A$ has real rank zero by \cite[4.1.1]{Rordam}, the only thing left to show is that the projections may be chosen to be trivial in $K$-theory. Let $\eps>0$. Choose $n\in\IN$, numbers $0\leq \lambda_0\leq\dots\leq\lambda_n \leq 1$ and pairwise orthogonal projections $q_0,\dots,q_n\in A$ as above without the $K$-theory assumption.

By \cite[Section 1]{Cuntz81}, we can find a non-trivial projection $\tilde{q}_n\leq q_n$ with $[\tilde{q}_n]_0=0\in K_0(A)$, and set $p_n=\tilde{q}_n$. Now assume that $j\in\set{1,\dots,n}$ is a number for which we have already constructed projections $\tilde{q}_n,\dots,\tilde{q}_{n-j+1}$ and $p_n,\dots,p_{n-j+1}$. Apply \cite[Section 1]{Cuntz81} again to find a non-trivial projection $\tilde{q}_{n-j}\leq q_{n-j}$ such that $[\tilde{q}_{n-j}]_0=-[q_{n-j+1}-p_{n-j+1}]_0\in K_0(A)$, and set $p_{n-j} = \tilde{q}_{n-j}+q_{n-j+1}-p_{n-j+1}$. After $n$ steps, we have constructed pairwise orthogonal projections $p_0,\dots,p_n$, all being trivial in $K_0(A)$, such that
\[
\sum_{j=0}^n \lambda_n (q_n-p_n) = \lambda_0(q_0-\tilde{q}_0) + \sum_{j=1}^n (\lambda_j-\lambda_{j-1})(q_j-\tilde{q}_j).
\]
Since $h$ has full spectrum, we necessarily have $\lambda_0\leq\eps$ and $\lambda_j-\lambda_{j-1}\leq\eps$ for all $j\in\set{1,\dots,n}$. It follows that the above sum has norm at most $\eps$. Hence $h=_{2\eps} \sum_{j=0}^n \lambda_n p_n$.
\end{proof}

As a consequence, we get:

\begin{lemma} \label{full spectrum 1}
Let $A$ be a unital, simple and purely infinite \cstar-algebra. Let $h_0,h_1\in A$ be two positive contractions with full spectrum $[0,1]$. For every $\eps>0$, there is a unitary $u\in A$ with $h_1 =_\eps uh_0u^*$.
\end{lemma}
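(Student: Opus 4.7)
The plan is to apply Lemma \ref{full spectrum pi} to $h_0$ and $h_1$ simultaneously using a common partition, and then to construct $u$ by prescribing its action on the resulting projections via Murray--von Neumann equivalences (which in a simple, purely infinite \cstar-algebra are detected by $K_0$).

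First, I fix $\eps>0$ and a partition $0\leq\lambda_0<\lambda_1<\cdots<\lambda_n\leq 1$ fine enough that $\lambda_0$, $1-\lambda_n$, and every gap $\lambda_j-\lambda_{j-1}$ is less than $\eps/3$. The proof of Lemma \ref{full spectrum pi} really only uses real rank zero of $A$ together with Cuntz's $K_0$-manipulation, so the values $\lambda_j$ may be prescribed in advance. Applying this for each $i\in\{0,1\}$ produces pairwise orthogonal nonzero projections $p_0^{(i)},\dots,p_n^{(i)}\in A$, all trivial in $K_0(A)$, such that $h_i\approx_{\eps/3}\sum_{j=0}^n \lambda_j\, p_j^{(i)}$. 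That the $p_j^{(i)}$ can be taken nonzero follows from $h_i$ having full spectrum together with the fineness of the partition.

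Next, for each $j$ the projections $p_j^{(0)}$ and $p_j^{(1)}$ are nonzero and both trivial in $K_0(A)$, hence are Murray--von Neumann equivalent by \cite[Section 1]{Cuntz81}; I pick $v_j\in A$ with $v_j^*v_j=p_j^{(0)}$ and $v_jv_j^*=p_j^{(1)}$. The complements $P_i:=1-\sum_j p_j^{(i)}$ satisfy $[P_i]_0=[1_A]_0$; after a minor adjustment (splitting off a small trivial-$K_0$ subprojection from one of the $p_j^{(i)}$ to eliminate the edge case in which exactly one of $P_0,P_1$ vanishes) one may assume $P_0$ and $P_1$ are either both zero or both nonzero. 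In the nonzero case, they too are Murray--von Neumann equivalent via some $w\in A$; in the zero case set $w=0$. Then $u:=w+\sum_j v_j\in A$ is unitary, and a short orthogonality computation gives $u\,p_j^{(0)}\,u^*=p_j^{(1)}$, whence
\[
u h_0 u^* \;\approx_{\eps/3}\; \sum_{j=0}^n \lambda_j\, u p_j^{(0)} u^* \;=\; \sum_{j=0}^n \lambda_j\, p_j^{(1)} \;\approx_{\eps/3}\; h_1,
\]
so that $\|uh_0u^*-h_1\|<\eps$.

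The main obstacle is purely bookkeeping: arranging a common partition for both $h_0$ and $h_1$ in Lemma \ref{full spectrum pi}, and ensuring the complementary projections $P_0,P_1$ are simultaneously zero or simultaneously nonzero. Once the two discretizations are so aligned, the remaining content is Cuntz's theorem that nonzero projections in a simple, purely infinite \cstar-algebra are Murray--von Neumann equivalent iff their $K_0$-classes coincide.
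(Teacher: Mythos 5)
Your proposal is correct and follows essentially the same route as the paper: discretize both $h_0$ and $h_1$ via Lemma \ref{full spectrum pi} with $K_0$-trivial projections, align the two spectral decompositions (the paper does this a posteriori by ``breaking up projections'' rather than prescribing a common partition, but this is cosmetic), and assemble the unitary from Murray--von Neumann equivalences supplied by Cuntz's results, taking care of the complementary projections. The only point worth spelling out slightly more is that obtaining all $p_j^{(i)}$ nonzero requires choosing the real-rank-zero approximation tolerance much finer than the partition mesh, but this is the same level of detail at which the paper itself operates.
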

\begin{proof}
Let $\eps>0$. By \ref{full spectrum pi}, we can find numbers $k,n\in\IN$, numbers $0\leq\lambda_0\leq\dots\leq\lambda_k\leq 1$ and $0\leq\mu_0\leq\dots\leq\mu_n\leq 1$, pairwise orthogonal projections $p_0,\dots,p_k$ and $q_0,\dots,q_n$ in $A$ that are all trivial in $K_0(A)$, such that
\[
\sum_{i=0}^k \lambda_ip_i =_\eps h_0\quad\text{and}\quad \sum_{j=0}^n \mu_jq_j =_\eps h_1.
\]
By breaking up projections, if necessary, we may assume that $k=n$ and that $|\lambda_i-\mu_i|\leq\eps$ for all $i=0,\dots,n$. By breaking up the projections $p_0, q_0$, if necessary, we may moreover assume that $\sum_{i=0}^n p_i \neq \eins \neq \sum_{i=0}^n q_i$.

It follows from \cite[Section 1]{Cuntz81} that there exist partial isometries $v_0,\dots,v_n,v\in A$ with
\[
v^*v = \eins_A-\sum_{j=0}^n p_j,~ vv^*=\eins_A-\sum_{j=0}^n q_j
\]
and
\[
v_j^*v_j = p_j,~ v_jv_j^*=q_j\quad\text{for all}~j=0,\dots,n.
\]
The element $u=v+v_0+\dots+v_n\in A$ then defines a unitary. We have
\[
\begin{array}{clcll}
uh_0u^* &=_\eps& \dst u\Bigl(\sum_{j=0}^n \lambda_j p_j\Bigl)u^* 
&=& \dst\sum_{j=0}^n \lambda_j up_ju^* \\
&=& \dst\sum_{j=0}^n \lambda_j v_j\cdot v_j^*v_j\cdot v_j^*
&=& \dst\sum_{j=0}^n \lambda_j q_j \\
&=_\eps& \dst\sum_{j=0}^n \mu_j q_j 
&=_\eps& h_1.
\end{array} 
\]
This finishes the proof.
\end{proof}

\begin{lemma} \label{full spectrum}
Let $A$ be a unital, simple and purely infinite \cstar-algebra. Let $\omega\in\beta\IN\setminus\IN$ be a free ultrafilter. Let $h_0,h_1\in A_\omega$ be two positive contractions with full spectrum $[0,1]$. Then $h_0$ and $h_1$ are unitarily equivalent.
\end{lemma}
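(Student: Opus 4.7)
The plan is to upgrade Lemma \ref{full spectrum 1} from approximate to exact unitary equivalence via a diagonal construction on representing sequences in $A$. Write $h_i=[(h_i^{(n)})_n]_\omega$ with each $h_i^{(n)}\in A$ a positive contraction.

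The preparatory step is to replace these by representing sequences in which each $h_i^{(n)}$ has full spectrum $[0,1]$ in $A$. By evaluating $\|\phi(h_i)\|$ at tent functions $\phi$ supported in small neighbourhoods of the points of a countable dense subset of $[0,1]$, the full-spectrum hypothesis on $h_i\in A_\omega$ implies that for every $\eta>0$, the set of $n$ for which both $\Sp(h_0^{(n)})$ and $\Sp(h_1^{(n)})$ are $\eta$-dense in $[0,1]$ lies in $\omega$. For such $n$, a real-rank-zero approximation $h_i^{(n)}\approx_\eta\sum_j\lambda_jq_j$ with the $\lambda_j$'s $O(\eta)$-dense in $[0,1]$ is available. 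I would then exploit pure infiniteness of $A$ to split each $q_j$ into two non-zero sub-projections $q_j=q_j'+r_j$ and replace the summand $\lambda_jq_j$ by $\lambda_jq_j'+(\lambda_{j-1}r_j+e_j)$, where $e_j\in r_jAr_j$ is a positive contraction of spectrum $[0,\lambda_j-\lambda_{j-1}]$ (with the convention $\lambda_{-1}=0$, and with one extra sub-projection reserved for filling $[\lambda_n,1]$). Such an $e_j$ exists because any unital, simple, purely infinite \cstar-algebra---in particular the corner $r_jAr_j$---contains a positive contraction of full spectrum $[0,1]$, which can be built inductively from a dyadic tree of orthogonal sub-projections as a norm-Cauchy limit of step-function approximations. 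The resulting element has full spectrum $[0,1]$ in $A$ and lies within $O(\eta)$ of $h_i^{(n)}$; diagonalising $\eta=1/k$ over nested $\omega$-large sets then produces the desired representing sequences $(\tilde h_i^{(n)})_n$ of $h_i$.

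Now apply Lemma \ref{full spectrum 1} with $\eps=1/n$ to each pair $(\tilde h_0^{(n)},\tilde h_1^{(n)})$ to obtain a unitary $u_n\in A$ with $\|u_n\tilde h_0^{(n)}u_n^*-\tilde h_1^{(n)}\|<1/n$. The element $u:=[(u_n)_n]_\omega\in A_\omega$ is then a unitary satisfying $uh_0u^*=h_1$, completing the proof.

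The main technical obstacle is the perturbation argument producing full-spectrum lifts, in particular the existence of full-spectrum positive contractions inside arbitrary simple purely infinite corners of $A$ together with the careful bookkeeping of norm distances; the remaining steps are a routine diagonal/ultrafilter argument. An equivalent route I would keep in reserve, avoiding the perturbation altogether, is to verify directly that the proofs of Lemmas \ref{full spectrum pi} and \ref{full spectrum 1} go through with ``full spectrum'' replaced by ``$\eta$-dense spectrum'' at the cost of an additional additive error $O(\eta)$---the only step affected being the estimate $\lambda_0\leq\eps$, $\lambda_j-\lambda_{j-1}\leq\eps$ at the end of Lemma \ref{full spectrum pi}.
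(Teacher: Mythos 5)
Your argument is correct, but it takes a genuinely different route from the paper. The paper's proof is a one-liner: since an ultrapower of a unital, simple, purely infinite \cstar-algebra is again unital, simple and purely infinite, Lemma \ref{full spectrum 1} applies \emph{directly to $h_0,h_1$ inside $A_\omega$}, producing for every $\eps>0$ a unitary $v_\eps\in A_\omega$ with $v_\eps h_0v_\eps^*=_\eps h_1$; a standard reindexation (representing each $v_{1/k}$ by a sequence of unitaries in $A$ and diagonalising over nested $\omega$-large sets) then upgrades this to exact unitary equivalence. You instead work at the level of representing sequences in $A$, which forces you to first manufacture representatives with genuinely full spectrum --- hence the perturbation lemma (splitting the spectral projections and inserting full-spectrum positive contractions of the corners to fill the spectral gaps), whose bookkeeping is sound: the $\eta$-density of $\Sp(h_i^{(n)})$ for $\omega$-many $n$ does follow from $\Sp(h_i)=[0,1]$, the inserted pieces live in orthogonal corners so the total perturbation is the maximum $O(\eta)$ rather than a sum, and the final entrywise application of Lemma \ref{full spectrum 1} with $\eps=1/n$ closes the argument. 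What each approach buys: the paper's version is short but quietly relies on the (standard, though not trivial) fact that simplicity and pure infiniteness pass to ultrapowers; yours avoids that input entirely at the cost of the perturbation argument, and your fallback observation --- that Lemmas \ref{full spectrum pi} and \ref{full spectrum 1} survive verbatim with ``full spectrum'' weakened to ``$\eta$-dense spectrum'' up to an extra additive $O(\eta)$, since the only place fullness is used is the estimate $\lambda_0\leq\eps$, $\lambda_j-\lambda_{j-1}\leq\eps$ --- is arguably the cleanest way to implement your route and is worth preferring over the explicit spectral surgery.
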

\begin{proof}
Since $A_\omega$ is again simple and purely infinite, this follows directly from \ref{full spectrum 1} and a standard reindexation argument.
\end{proof}

The main technical observation of this section is the following:

\begin{prop} \label{2-color-embedding}
Let $A$ be a separable \cstar-algebra with a positive element $e\in A$ of norm 1. Let $\omega\in\beta\IN\setminus\IN$ be a free ultrafilter. Then there exist two c.p.c.~order zero maps $\phi_0,\phi_1: A\to (\CO_\infty)_\omega$ with $\phi_0(e)+\phi_1(e)=\eins$.
\end{prop}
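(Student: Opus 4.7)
The plan is to first produce a single c.p.c.~order zero map $\phi_0\colon A\to(\CO_\infty)_\omega$ such that $\phi_0(e)$ is a positive contraction of full spectrum $[0,1]$, and then to exploit Lemma~\ref{full spectrum} by unitarily conjugating $\phi_0$ to produce the required $\phi_1$.

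For the first part, I would invoke Voiculescu's theorem that the cone $CA:=C_0(0,1]\otimes A$ is quasidiagonal for every separable \cstar-algebra $A$. This provides c.p.c.~maps $\psi_k\colon CA\to M_{n_k}$ which are asymptotically multiplicative and asymptotically isometric, and which assemble into a faithful *-homomorphism $CA\to\prod_\omega M_{n_k}$. Composing with the map induced by unital embeddings $M_{n_k}\hookrightarrow\CO_\infty$---available since $\CO_\infty$ is unital and properly infinite---yields a faithful *-homomorphism $\pi\colon CA\to(\CO_\infty)_\omega$. Setting $\phi_0(a):=\pi(t\otimes a)$, where $t\in C_0(0,1]$ is the canonical generator $t(s)=s$, produces a c.p.c.~order zero map via the standard correspondence between c.p.c.~order zero maps out of $A$ and *-homomorphisms out of $CA$. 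Using $1\in\sigma(e)$, a direct computation shows $\sigma_{CA}(t\otimes e)=[0,1]$, so faithfulness of $\pi$ ensures that $h:=\phi_0(e)$ has full spectrum in $(\CO_\infty)_\omega$.

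For the second part, observe that $\eins-h$ is again a positive contraction of full spectrum $[0,1]$ in $(\CO_\infty)_\omega$. Since this ultrapower is unital, simple, and purely infinite, Lemma~\ref{full spectrum} supplies a unitary $u\in(\CO_\infty)_\omega$ with $uhu^*=\eins-h$. Then $\phi_1:=\ad(u)\circ\phi_0$ is still c.p.c.~order zero, and by construction $\phi_0(e)+\phi_1(e)=h+(\eins-h)=\eins$, as required.

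The main obstacle will be the first part: arranging a \emph{faithful} embedding of $CA$ into $(\CO_\infty)_\omega$ so that the spectrum of $\phi_0(e)$ is preserved. Once $\phi_0$ is in hand with $\phi_0(e)$ of full spectrum, everything else is essentially immediate from Lemma~\ref{full spectrum}. The small technicalities worth double-checking are that the asymptotic isometry supplied by quasidiagonality really produces an injective (and not just nonzero) induced *-homomorphism into the ultrapower, and that $(\CO_\infty)_\omega$ inherits unitality, simplicity, and pure infiniteness from $\CO_\infty$ so that Lemma~\ref{full spectrum} is applicable.
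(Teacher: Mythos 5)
Your overall strategy is exactly the paper's: use Voiculescu's quasidiagonality of the cone to embed $\CC_0\big((0,1],A\big)$ faithfully into $(\CO_\infty)_\omega$ via matrix algebras, observe that the image $h$ of $\id_{[0,1]}\otimes e$ has full spectrum $[0,1]$ because $\|e\|=1$, and then conjugate by the unitary from Lemma \ref{full spectrum} carrying $h$ to $\eins-h$. The spectrum computation and the entire second half of your argument are fine.

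The one genuine error is the claim that \emph{unital} embeddings $M_{n_k}\hookrightarrow\CO_\infty$ exist ``since $\CO_\infty$ is unital and properly infinite.'' They do not exist for $n_k\geq 2$: a unital copy of $M_n$ inside a unital \cstar-algebra $B$ forces $[\eins_B]$ to be divisible by $n$ in $K_0(B)$, whereas $[\eins_{\CO_\infty}]$ is a generator of $K_0(\CO_\infty)\cong\IZ$. This $K_0$-obstruction is precisely what the paper's proof works around: it first maps the cone into the ultrapower of the universal UHF algebra $\CQ$, then embeds $\CQ$ unitally into $\CO_2$ (which has trivial $K_0$), and finally embeds $\CO_2$ non-unitally into $\CO_\infty$. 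Fortunately, your argument never actually uses unitality of the embeddings $M_{n_k}\hookrightarrow\CO_\infty$: non-unital embeddings certainly exist by proper infiniteness (take matrix units $s_is_j^*$ built from isometries with orthogonal ranges), they are automatically isometric because matrix algebras are simple, the induced map on ultraproducts is therefore still a $*$-monomorphism, and the spectrum of $h$ is unaffected since $0\in[0,1]$ in any case. So the proof becomes correct, and essentially identical to the paper's, once ``unital'' is replaced by ``non-unital''; but as written, the justification offered for that step is false.
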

\begin{proof}
By a result of Voiculescu \cite{Voiculescu91}, the cone over $A$ is quasidiagonal. Hence we can find a $*$-monomorphism $\kappa: \CC_0\big( (0,1], A \big) \to\CQ_\omega$, where $\CQ$ is the universal UHF algebra. By choosing some (necessarily non-unital) embedding $\mu: \CQ\to\CO_\infty$\footnote{For example, this can be a composition of some non-unital embedding $\CO_2\into\CO_\infty$ and some unital embedding $\CQ\into\CO_2$, which exists by \cite[2.8]{KirchbergPhillips00}.}, the composition $\psi_0=\mu_\omega\circ\kappa: \CC_0\big( (0,1], A \big) \to (\CO_\infty)_\omega$ is a $*$-monomorphism. The positive element $h=\psi_0(\id_{[0,1]}\otimes e)\in (\CO_\infty)_\omega$ then has full spectrum $[0,1]$.
 
Applying \ref{full spectrum}, we find a unitary $u\in (\CO_\infty)_\omega$ with $uhu^* = \eins-h$. Then $\psi_1 = \ad(u)\circ\psi_0: \CC_0\big( (0,1], A \big) \to (\CO_\infty)_\omega$ is also a $*$-monomorphism.
This gives rise to two c.p.c.~order zero maps $\phi_0,\phi_1: A\to (\CO_\infty)_\omega$ via $\phi_i(x)=\psi_i(\id_{[0,1]}\otimes x)$ for all $x\in A$ and $i=0,1$. We then have
\[
\phi_0(e)+\phi_1(e) = h + uhu^* = \eins,
\]
as required.
\end{proof}

As a consequence, we get a general nuclear dimension estimate for $\CO_\infty$-stable \cstar-algebras that generalizes \cite[3.3]{BarlakEndersMatuiSzaboWinter}:

\begin{cor} \label{Oinf estimate}
Let $A$ be a \cstar-algebra and $B$ a non-zero, separable, $\CO_\infty$-stable \cstar-algebra. Then
\[
\dimnuceins(A\otimes\CO_\infty)\leq 2\dimnuceins(A\otimes B).
\]
\end{cor}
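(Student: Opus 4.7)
Set $n = \dimnuc(A \otimes B)$, so the claim amounts to $\dimnuc(A \otimes \CO_\infty) \leq 2n+1$. My strategy is to combine the 2-colored embedding furnished by Proposition~\ref{2-color-embedding} with an $(n+1)$-colored nuclear-dimension approximation of $A \otimes B$ to produce a $2(n+1)$-colored approximation of $A \otimes \CO_\infty$, using that $B \otimes \CO_\infty \cong B$ implies $(A \otimes \CO_\infty) \otimes B \cong A \otimes B$.

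First I would apply Proposition~\ref{2-color-embedding} to $B$ and a positive norm-one element $e \in B$, yielding c.p.c.\ order zero maps $\phi_0, \phi_1 : B \to (\CO_\infty)_\omega$ with $\phi_0(e) + \phi_1(e) = \eins$. Exploiting $\CO_\infty$-stability of $A \otimes \CO_\infty$ (after reducing to a separable subalgebra of $A$ which accommodates the finite set in question), a standard reindexation argument produces c.p.c.\ order zero maps $\Phi_0, \Phi_1 : B \to (A \otimes \CO_\infty)_\omega$ whose ranges commute with $A \otimes \CO_\infty$ and which still satisfy $\Phi_0(e) + \Phi_1(e) = \eins$. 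The commuting-ranges construction then gives c.p.c.\ order zero maps
\[
\tilde\Phi_i : (A \otimes \CO_\infty) \otimes B \longrightarrow (A \otimes \CO_\infty)_\omega, \qquad \tilde\Phi_i(x \otimes b) = x \cdot \Phi_i(b),
\]
with $\tilde\Phi_0(x \otimes e) + \tilde\Phi_1(x \otimes e) = x$ for every $x \in A \otimes \CO_\infty$.

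Given a finite set $\mathcal{F} \fin A \otimes \CO_\infty$ and $\eps > 0$, I would apply the $(n+1)$-colored nuclear-dimension approximation of $(A \otimes \CO_\infty) \otimes B \cong A \otimes B$ to the finite set $\mathcal{F} \otimes e$: pick a finite-dimensional algebra $F = F^{(0)} \oplus \cdots \oplus F^{(n)}$, a c.p.c.\ map $\psi$, and a c.p.\ map $\eta = \sum_{i=0}^{n} \eta^{(i)}$ with each $\eta^{(i)}$ c.p.c.\ order zero such that $\eta \psi(x \otimes e) =_\eps x \otimes e$ for each $x \in \mathcal{F}$. The composition
\[
A \otimes \CO_\infty \xrightarrow{x \mapsto x \otimes e} (A \otimes \CO_\infty) \otimes B \xrightarrow{\psi} F \xrightarrow{\eta} (A \otimes \CO_\infty) \otimes B \xrightarrow{\tilde\Phi_0 + \tilde\Phi_1} (A \otimes \CO_\infty)_\omega
\]
then approximates the canonical embedding $A \otimes \CO_\infty \hookrightarrow (A \otimes \CO_\infty)_\omega$ within $2\eps$ on $\mathcal{F}$. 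The back half decomposes as $\sum_{j=0}^{1} \sum_{i=0}^{n} \tilde\Phi_j \circ \eta^{(i)}$ into $2(n+1)$ pieces $\tilde\Phi_j \circ \eta^{(i)} : F^{(i)} \to (A \otimes \CO_\infty)_\omega$, each c.p.c.\ order zero, since postcomposing an order zero map by an order zero map preserves orthogonality on the domain.

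To finish, I would appeal to a standard reindexation/lifting of the ultrapower-valued c.p.c.\ order zero pieces to extract genuine c.p.c.\ order zero maps into $A \otimes \CO_\infty$, producing a finite $2(n+1)$-colored approximation of the identity on $\mathcal{F}$ inside $A \otimes \CO_\infty$ and hence $\dimnuc(A \otimes \CO_\infty) \leq 2n+1$, as required. I expect the main obstacle to be the second paragraph's promotion step — marrying the 2-colored embedding of Proposition~\ref{2-color-embedding} with the $\CO_\infty$-absorbing structure of $A \otimes \CO_\infty$ so that the resulting summands land centrally while retaining their sum identity. The remaining combinatorial and lifting steps are formal and follow the blueprint of \cite[3.3]{BarlakEndersMatuiSzaboWinter}.
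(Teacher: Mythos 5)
Your argument is correct, and its skeleton---multiplying the two colors supplied by Proposition~\ref{2-color-embedding} against the $(n+1)$ colors of a nuclear-dimension approximation of $A\otimes B\cong (A\otimes\CO_\infty)\otimes B$, then lifting from the ultrapower---is exactly the paper's. The one place you genuinely diverge is the step you yourself flag as the main obstacle: you promote $\phi_0,\phi_1$ to maps landing in the relative commutant of $A\otimes\CO_\infty$ inside its own ultrapower (via a separable reduction of $A$ and reindexation) and then multiply against the algebra. The paper sidesteps this entirely: after noting that one may assume $A\cong A\otimes\CO_\infty$ (since $B$ already absorbs $\CO_\infty$) and that by \cite[2.5, 2.6]{TikuisisWinter14} it suffices to bound the nuclear dimension of the first-factor embedding $(\id_A\otimes\eins)_\omega\colon A\to (A\otimes\CO_\infty)_\omega$, it takes the upward maps to be $(\id_A\otimes\phi_i)\circ\kappa^{(j)}$ with $\id_A\otimes\phi_i\colon A\otimes B\to A\otimes(\CO_\infty)_\omega\subseteq (A\otimes\CO_\infty)_\omega$; commutation with the $A$-factor is automatic because $\phi_i$ lives in the second tensor factor, so no central-sequence embedding and no additional $\CO_\infty$-stability input is needed. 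Your route does go through, but two points of hygiene: when $A\otimes\CO_\infty$ is non-unital, the identity $\Phi_0(e)+\Phi_1(e)=\eins$ has to be read in the central sequence algebra $\big((A\otimes\CO_\infty)_\omega\cap(A\otimes\CO_\infty)'\big)/\ann(A\otimes\CO_\infty)$ (or in the multipliers) rather than in $(A\otimes\CO_\infty)_\omega$ itself; and the well-definedness and order zero property of $\tilde\Phi_i$ on the minimal tensor product is cleanest via the structure theorem for order zero maps, using that one may assume everything in sight is nuclear (otherwise both sides of the inequality are infinite, as the paper observes). The final lifting of the finite-dimensional order zero pieces out of the ultrapower is indeed standard and is precisely what the cited Tikuisis--Winter results package.
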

\begin{proof}
Let $d=\dimnuc(A\otimes B)$.
Since $B$ is assumed to be $\CO_\infty$-absorbing, we may also assume without loss of generality that $A$ is $\CO_\infty$-absorbing. We may certainly also assume that $A$ is nuclear, as there is otherwise nothing to show. By \cite[2.5, 2.6]{TikuisisWinter14}, it suffices to find an estimate for the nuclear dimension of the embedding $(\id_A\otimes\eins)_\omega: A\to (A\otimes\CO_\infty)_\omega$.

So let $F\fin A$ and $\eps>0$ be arbitrary. As $B$ is non-zero, we can find some positive contraction $e\in B$ of norm 1. Apply \ref{2-color-embedding} to find two c.p.c.~order zero maps $\phi_0,\phi_1: B\to (\CO_\infty)_\omega$ with $\phi_0(e)+\phi_1(e)=\eins$.
By the definition of $d$, we can find a finite-dimensional \cstar-algebra $\CF$, a c.p.c.~map $\mu: A\otimes B\to\CF$ and c.p.c.~order zero maps $\kappa^{(0)},\dots,\kappa^{(d)}: \CF\to A\otimes B$ with
\[
x\otimes e =_\eps \sum_{j=0}^d \kappa^{(j)}\circ\mu(x\otimes e)\quad\text{for all}~x\in F.
\]
Define the c.p.c.~map $\Psi: A\to\CF$ via $\Psi(x)=\mu(x\otimes e)$. Moreover, define for all $i=0,1$ and $j=0,\dots,d$ the c.p.c.~order zero map $\Phi^{(i,j)}: \CF\to (A\otimes\CO_\infty)_\omega$ via $\Phi^{(i,j)} = (\id_A\otimes\phi_i)\circ\kappa^{(j)}$. Then we have
\[
\begin{array}{ccl}
\dst\sum_{i=0,1}\sum_{j=0}^d \Phi^{(i,j)}\circ\Psi(x) &=& \dst\sum_{i=0,1} (\id_A\otimes\phi_i)\left( \sum_{j=0}^d \kappa^{(j)}\circ\mu(x\otimes e) \right) \\\\
&=_{2\eps}& \dst\sum_{i=0,1} x\otimes\phi_i(e) ~=~ x\otimes\eins
\end{array}
\]
for all $x\in F$. This yields a $2(d+1)$-colored decomposition of the map $(\id_A\otimes\eins)_\omega: A\to (A\otimes\CO_\infty)_\omega$ through finite-dimensional \cstar-algebras and finishes the proof.
\end{proof}


\section{$\CO_\infty$-stable \cstar-algebras have finite nuclear dimension}

\noindent
In this section, we prove our main result. This will be a consequence of the estimate \ref{Oinf estimate}, together with a deep structural result of Kirchberg and R{\o}rdam from \cite{KirchbergRordam05}. Let us now recall some of these results:

\begin{defi}[see \cite{Rordam04}] Let $\set{t_n}_{n\in\IN}\subset [0,1)$ be a dense sequence. For every $n$, define the $*$-homomorphism
\[
\phi_n: \CC_0\bigl( [0,1), M_{2^n} \bigl) \to \CC_0\bigl( [0,1), M_{2^{n+1}} \bigl)
\]
via
\[\phi_n(f)(t) = \diag\Big( f(t), f\big( \max(t, t_n) \big) \Big)\quad\text{for all}~t\in [0,1).
\]
Set $\dst \CA_{[0,1]} = \lim_{\longrightarrow} \set{ \CC\bigl( [0,1), M_{2^n} \bigl), \phi_n }$.
\end{defi}

In \cite{KirchbergRordam05}, Kirchberg and R{\o}rdam have shown a deep structural result for separable, nuclear, strongly purely infinite \cstar-algebras that are homotopic to zero in an ideal-system preserving way. An interesting consequence of this observation is that tensoring a given nuclear \cstar-algebra with $\CA_{[0,1]}$ yields an $\mathrm{AH}_0$ algebra with low dimension:

\begin{theorem}[see {\cite[5.12, 6.2]{KirchbergRordam05}}] \label{KiRo}
For every non-zero, separable, nuclear \cstar-algebra $B$, the \cstar-algebra $B\otimes\CA_{[0,1]}$ is an $\mathrm{AH}_0$ algebra of topological dimension one. In particular, the decomposition rank of $B\otimes\CA_{[0,1]}$ is one.
\end{theorem}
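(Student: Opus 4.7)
The plan is to apply the two cited theorems of Kirchberg and R{\o}rdam as a black box, after first verifying that $\CA_{[0,1]}$ belongs to the class of algebras to which those theorems apply. The initial step would be to check that $\CA_{[0,1]}$ is (i) separable and nuclear, (ii) strongly purely infinite, and (iii) homotopic to zero in an ideal-system-preserving way. Points (i) and (ii) are observations recorded in R{\o}rdam's construction from \cite{Rordam04}. Point (iii) exploits that each building block $\CC_0\bigl([0,1),M_{2^n}\bigr)$ is a cone, hence contractible, and that the connecting maps $\phi_n$ act diagonally via $\max(t,t_n)$-evaluations; this allows one to assemble the cone contractions across the inductive system in a manner compatible with the primitive ideal space of $\CA_{[0,1]}$, which Rørdam identifies with $[0,1]$.

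With these hypotheses in hand, I would invoke \cite[5.12]{KirchbergRordam05}: tensoring a non-zero, separable nuclear \cstar-algebra $B$ with a strongly purely infinite, nuclear \cstar-algebra that is ideal-system-preservingly homotopic to zero yields an $\mathrm{AH}_0$ algebra. The supplementary dimension control in \cite[6.2]{KirchbergRordam05} then refines this conclusion in the specific situation at hand: since $\CA_{[0,1]}$ is itself built from one-dimensional spectra $[0,1)$, one concludes that $B\otimes\CA_{[0,1]}$ can be realised as an inductive limit of homogeneous \cstar-algebras of the form $\CC(X,F)$ with $F$ finite-dimensional and $\dim X\leq 1$. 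This establishes the first assertion.

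The decomposition rank bound follows from the general principle, due to Winter, that the decomposition rank of an AH algebra with building blocks $\CC(X_i,F_i)$, $F_i$ finite-dimensional, is bounded by $\limsup_i \dim X_i$; applying this with $\dim X_i\leq 1$ yields the stated inequality $\dr(B\otimes\CA_{[0,1]})\leq 1$.

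The main obstacle lies entirely within \cite[5.12]{KirchbergRordam05} and is not addressed here: reconstructing $B\otimes\CA_{[0,1]}$ as an inductive limit with low-dimensional, finite-dimensional fibres depends on the deep asymptotic machinery developed in \cite{KirchbergRordam05}, which uses ideal-system-preserving null-homotopies to approximate elements locally by images of homogeneous subalgebras. Since reproducing that argument is well beyond the scope of this note, the theorem is imported as a citation, and our role is only to verify that $\CA_{[0,1]}$ meets its hypotheses.
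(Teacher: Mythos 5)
The paper offers no proof of this statement beyond the citation to Kirchberg--R{\o}rdam \cite[5.12, 6.2]{KirchbergRordam05}, and your proposal takes essentially the same route: it imports those results as black boxes and merely verifies that $\CA_{[0,1]}$ satisfies their hypotheses (separable, nuclear, strongly purely infinite, ideal-system-preservingly homotopic to zero), which is exactly the role this theorem plays in the paper. The only cosmetic difference is that your argument yields $\dr(B\otimes\CA_{[0,1]})\leq 1$ rather than $=1$, but the upper bound is all that is used in the sequel (and equality follows since a purely infinite algebra cannot be AF).
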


Combining this structural result with our nuclear dimension estimate from the previous section, we obtain our main result:

\begin{theorem} \label{main result}
Let $A$ be a separable, nuclear \cstar-algebra. Then the nuclear dimension of $A\otimes\CO_\infty$ is at most three.
\end{theorem}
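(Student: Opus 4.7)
The plan is to feed Corollary~\ref{Oinf estimate} with a well-chosen separable $\CO_\infty$-stable auxiliary algebra $B$ so that $\dimnuc(A\otimes B)$ is already at most $1$. Since the corollary reads $\dimnuceins(A\otimes\CO_\infty)\le 2\dimnuceins(A\otimes B)$, such a choice would immediately give $\dimnuceins(A\otimes\CO_\infty)\le 4$, hence $\dimnuc(A\otimes\CO_\infty)\le 3$ as claimed.

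The natural candidate is $B=\CA_{[0,1]}\otimes\CO_\infty$. This is non-zero, separable, and (since $\CO_\infty$ is strongly self-absorbing) satisfies $B\otimes\CO_\infty\cong B$, so it is admissible in Corollary~\ref{Oinf estimate}. Rewriting the tensor factors, $A\otimes B\cong (A\otimes\CO_\infty)\otimes\CA_{[0,1]}$.

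The key input is then Theorem~\ref{KiRo} applied to the separable nuclear \cstar-algebra $A\otimes\CO_\infty$ (nuclearity is preserved under tensoring with the nuclear algebra $\CO_\infty$). This yields that $(A\otimes\CO_\infty)\otimes\CA_{[0,1]}$ is an $\mathrm{AH}_0$ algebra of topological dimension one, so its decomposition rank, and hence nuclear dimension, is at most one. Thus $\dimnuc(A\otimes B)\le 1$.

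Assembling everything, Corollary~\ref{Oinf estimate} gives
\[
\dimnuceins(A\otimes\CO_\infty)\;\le\;2\dimnuceins(A\otimes B)\;\le\;2\cdot 2=4,
\]
so $\dimnuc(A\otimes\CO_\infty)\le 3$. There is no real technical obstacle beyond locating the right auxiliary algebra; the genuine work has already been done in Corollary~\ref{Oinf estimate} and in the Kirchberg--R{\o}rdam structure theorem~\ref{KiRo}. The only subtle point is the decision to \emph{pre-tensor} $A$ with $\CO_\infty$ before invoking~\ref{KiRo} — this is what forces the auxiliary algebra $B$ to carry the $\CO_\infty$ factor so that the hypothesis of Corollary~\ref{Oinf estimate} is met, while still enabling~\ref{KiRo} to absorb the nuclear algebra $A\otimes\CO_\infty$ into its conclusion.
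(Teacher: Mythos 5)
Your proof is correct and follows essentially the same route as the paper: Corollary~\ref{Oinf estimate} applied with an auxiliary algebra built from $\CA_{[0,1]}$, combined with the Kirchberg--R{\o}rdam structure theorem~\ref{KiRo}. The only difference is that the paper takes $B=\CA_{[0,1]}$ directly, citing R{\o}rdam's result that $\CA_{[0,1]}$ is already $\CO_\infty$-stable, whereas you manufacture the $\CO_\infty$-stability by tensoring with $\CO_\infty$ and then apply~\ref{KiRo} to $A\otimes\CO_\infty$ instead of $A$ --- a harmless variation, since the two choices of $B$ are in fact isomorphic.
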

\begin{proof}
The \cstar-algebra $\CA_{[0,1]}$ is $\CO_\infty$-stable by \cite[5.3]{Rordam04}.
So combining \ref{Oinf estimate} and \ref{KiRo}, we get
\[
\dimnuceins(A\otimes\CO_\infty) \leq 2\dimnuceins(A\otimes\CA_{[0,1]})\leq 4.
\]
This shows our claim.
\end{proof}

\begin{rem}
At first glance, it might appear that \ref{KiRo} (and therefore \ref{main result}) depends on Kirchberg's classification \cite{KirchbergC} of non-simple, strongly purely infinite \cstar-algebras. However, Kirchberg's classification theorem is only used in \cite{KirchbergRordam05} to deduce that $\CA_{[0,1]}$ (or more generally any separable, nuclear, strongly purely infinite \cstar-algebras homotopic to zero in an ideal system-preserving way) is in fact $\CO_2$-absorbing. While this is an important ingredient in verifying the $\mathrm{AH}_0$-structure of the \cstar-algebras in question, one can refrain from appealing to classification by weakening the statement in \ref{KiRo} to $\dr(B\otimes\CA_{[0,1]}\otimes\CO_2)=1$ for all non-zero, separable, nuclear \cstar-algebras $B$. In particular, this weaker statement can be proved exclusively with the methods from \cite{KirchbergRordam05}, without classification, and is sufficient to deduce our main result \ref{main result}.
\end{rem}


\bibliographystyle{gabor}
\bibliography{master}

\end{document}